 \newtheorem{theorem}{\sc\bf Theorem}[section]
 \newtheorem{lemma}[theorem]{\sc\bf Lemma}
 \newtheorem{definition}[theorem]{\sc\bf Definition}
  \numberwithin{equation}{section}
\def\@cite#1#2{#1\if@tempswa , #2\fi}
\title{{\bf Common properties of bounded linear operators \\ $AC$ and $BA$: Spectral theory}
\thanks{This work has been supported by National Natural Science Foundation
of China (11171066, 11201071, 11226113), Specialized Research Fund for the Doctoral
Program of Higher Education (20103503110001, 20113503120003), Natural
Science Foundation of Fujian Province (2011J05002, 2012J05003) and
Foundation of the Education Department of Fujian Province
(JA12074).}}
\author{Qingping \textsc{Zeng}\thanks{Email address: zqpping2003@163.com.}
 \quad Huaijie \textsc{Zhong}
}
\begin{document}
\date{}
\maketitle

\large

\begin{quote}
 {\bf Abstract:}~Let $X,Y$ be Banach spaces, $A:X \longrightarrow Y$ and $B,C:Y \longrightarrow X$ be
 bounded linear operators satisfying operator equation $ABA=ACA$. Recently, as extensions of Jacobson's lemma, Corach, Duggal and Harte studied common properties of $AC-I$ and $BA-I$ in algebraic viewpoint and also obtained some topological analogues. In this note, we continue to investigate common properties of $AC$ and $BA$ from the viewpoint of spectral theory. In particular, we give an affirmative answer to one question posed by Corach et al. by proving that $AC - I$ has closed range if and only if $BA - I$ has closed range.   \\
{\bf  2010 Mathematics Subject Classification:} Primary 47A05, 47A10.  ~  \\
{\bf Key words:} Jacobson's lemma; operator equation; common property; regularity.
\end{quote}

\section{Introduction and Notations}

For any Banach spaces $X$ and $Y$, let $\mathcal{B}(X,Y)$ denote the set of all bounded linear operators from $X$ to $Y$.
Jacobson's lemma [\cite{Aiena-Gonzalez,Barnes,Benhida-Zerouali,Harte-book,Lin common,Muller-book}] states that if $A \in \mathcal{B}(X,Y)$ and $C \in \mathcal{B}(Y,X)$ then
\begin{equation}\label{eq1.1} \qquad\qquad\qquad
 AC-I \makebox{ is invertible} \Longleftrightarrow  CA-I \makebox{ is invertible}.
\end{equation}
Recently, we generalized (\ref{eq1.1}) to various regularities, in the sense of Kordula and M\"uller, and showed that $AC-I$ and $CA-I$ share common complementability of kernels (see [\cite{Zeng-Zhong-RS-SR}]).
In 2013, Corach, Duggal and Harte [\cite{Corach-Duggal-Harte}] extended (\ref{eq1.1}) and many of its relatives form $CA-I$ to certain $BA-I$ under the assumption
\begin{equation}\label{eq1.2} \qquad\qquad\qquad\qquad\qquad\qquad\qquad
ABA=ACA,
\end{equation}
where $A \in \mathcal{B}(X,Y)$ and $B, C \in \mathcal{B}(Y,X)$. In this note, we continue to study this situation and show that $AC$ and $BA$ share many common spectral properties. For example, answering one question posed by Corach et al. in [\cite{Corach-Duggal-Harte}, p. 526], we prove that $AC - I$ has closed range if and only if $BA - I$ has closed range. But at present we are unable to decide whether $AC-I$ and $BA-I$ share common complementability of kernels. For some other open questions in this direction, we refer the reader to [\cite{Corach-Duggal-Harte,Zeng-Zhong-RS-SR}].

We first fix some natations in spectral theory. Throughout this paper, $\mathcal{B}(X) = \mathcal{B}(X,X)$. For an
operator $T \in \mathcal{B}(X)$, let $\mathcal {N}(T)$ denote its kernel, $\mathcal{R}(T)$ its range and $\sigma(T)$ its spectrum. For each $n \in \mathbb{N}:=\{0,1,2,\cdots\}$, we set $c_{n}(T) = \dim \mathcal
{R}(T^{n})/\mathcal {R}(T^{n+1})$ and $c^{'}_{n}(T) = \dim \mathcal
{N}(T^{n+1})/\mathcal {N}(T^{n}).$ It is well known that ([\cite{Kaashoek-M-A-12}, Lemmas 3.2 and 3.1]), for every
$n \in \mathbb{N}$,
$$c_{n}(T) = \dim X / (\mathcal {R}(T) + \mathcal {N}(T^{n})), \ \ \ \  c^{'}_{n}(T) =  \dim \mathcal {N}(T)
\cap \mathcal {R}(T^{n}).$$
Hence, it is easy to see that the
sequences $\{c_{n}(T)\}_{n=0}^{\infty}$ and
$\{c^{'}_{n}(T)\}_{n=0}^{\infty}$ are decreasing.
For each $n \in \mathbb{N}$, $T$ induces a
linear transformation from the vector space $\mathcal {R}(T^{n})/
\mathcal {R}(T^{n+1})$ to the space $\mathcal {R}(T^{n+1})/\mathcal
{R}(T^{n+2})$. We will let $k_{n}(T)$ be the dimension of the null
space of the induced map and let
$$k(T) = \sum_{n=0}^{\infty} k_{n}(T).$$ From Lemma 2.3 in [\cite{Grabiner 9}]
it follows that, for every $n \in \mathbb{N}$,
\begin{align*} \qquad  \qquad \qquad  k_{n}(T)
 &= \dim (\mathcal {N}(T) \cap \mathcal {R}(T^{n})) / (\mathcal {N}(T) \cap \mathcal {R}(T^{n+1})) \\
 &= \dim (\mathcal {R}(T) +
\mathcal {N}(T^{n+1})) / (\mathcal {R}(T) + \mathcal {N}(T^{n})).
 \end{align*}
We remark that the sequence $\{k_{n}(T)\}_{n=0}^{\infty}$ is not
always decreasing. From Theorem 3.7 in [\cite{Grabiner 9}]
it follows that
$$k(T) = \dim \mathcal {N}(T)/(\mathcal {N}(T) \cap \mathcal
{R}(T^{\infty})) = \dim (\mathcal {R}(T) + \mathcal
{N}(T^{\infty}))/\mathcal {R}(T).$$

Just as the definition of $k(T)$, we give the definitions of
$stable$ $nullity$ $c^{'}(T)$ and $stable$ $defect$ $c(T)$ as
follows.

\begin {definition}
 Let $T \in \mathcal{B}(X)$. The $stable$ $nullity$ $c^{'}(T)$ of $T$ is defined as
 $$c^{'}(T)= \sum_{n=0}^{\infty} c_{n}^{'}(T),$$
 and the $stable$ $defect$ $c(T)$ of $T$ is defined as $$c(T)= \sum_{n=0}^{\infty}
 c_{n}(T).$$
\end{definition}

It is easy to see that $c(T)= \dim X/\mathcal {R}(T^{\infty}) $ and
$c^{'}(T)= \dim \mathcal {N}(T^{\infty})$.

In [\cite{Kordula-Muller}],  Kordula and M\"uller defined the
concept of $regularity$ as follows:

\begin {definition} \begin {upshape}
([\cite{Kordula-Muller}]) \end {upshape}
 A non-empty subset ${\bf R} \subseteq \mathcal{B}(X)$ is called a
 $regularity$ if it satisfies the following conditions:

 $(1)$ If $A \in \mathcal{B}(X)$ and $n \geq 1$, then $A \in {\bf
 R}$ if and only if $A^{n} \in {\bf R}$.

 $(2)$ If $A, B, C, D \in \mathcal{B}(X)$ are mutually commuting operators satisfying $AC+BD=I$, then $AB \in {\bf
 R}$ if and only if $A,B \in {\bf R}$.
\end{definition}

A non-empty subset ${\bf R} \subseteq \mathcal{B}(X)$ defines in a
natural way a spectrum by $\sigma_{{\bf R}}(T) = \{ \lambda \in
\mathbb{C}: \lambda I - T \notin {\bf R} \} $, for every $T \in
\mathcal{B}(X)$. The crucial property of the spectrum $\sigma_{{\bf R}}$ corresponding to a regularity ${\bf R}$ is that it satisfies a restricted spectral mapping theorem ([\cite{Kordula-Muller}, Theorem 1.4]),
$$f(\sigma_{{\bf R}}(T))=\sigma_{{\bf R}}(f(T))$$
for every function $f$ analytic on a neighbourhood of $\sigma(T)$ which is non-constant on each component of its domain of definition.

We now give the definitions of some concrete subsets ${\bf R_{i}} \subseteq \mathcal{B}(X)$, $1
\leq i \leq 19$. For the fact that ${\bf R_{i}}$ ($1\leq i \leq 19$) form a regularity, the reader should refer to [\cite{Berkani,Mbekhta-Muller 9,Zeng-Zhong-RS-SR}].

\begin {definition}  ${\bf R_{1}} = \{T \in \mathcal{B}(X): c(T) = 0 \}$,

${\bf R_{2}} = \{T \in \mathcal{B}(X): c(T) < \infty \}$,

${\bf R_{3}} = \{T \in \mathcal{B}(X): \makebox{there exists } d \in
\mathbb{N} \makebox{ such that } c_{d}(T) = 0 \makebox{ and}$
$\mathcal {R}(T^{d+1}) \makebox{ is closed} \}$,

${\bf R_{4}} =  \{T \in \mathcal{B}(X): c_{n}(T) < \infty \makebox{
for every } n \in \mathbb{N} \} $,

${\bf R_{5}} = \{T \in \mathcal{B}(X): \makebox{there exists } d \in
\mathbb{N} \makebox{ such that } c_{d}(T) < \infty \makebox{ and}$
$\mathcal {R}(T^{d+1}) \makebox{ is closed} \}$,

${\bf R_{6}} = \{T \in \mathcal{B}(X): c^{'}(T) = 0 \makebox{ and }
\mathcal {R}(T) \makebox{ is closed} \}$,

${\bf R_{7}} = \{T \in \mathcal{B}(X): c^{'}(T) < \infty \makebox{
and } \mathcal {R}(T) \makebox{ is closed}  \}$,

${\bf R_{8}} = \{T \in \mathcal{B}(X): \makebox{there exists } d \in
\mathbb{N} \makebox{ such that } c_{d}^{'}(T) = 0 \makebox{ and}$
$\mathcal {R}(T^{d+1}) \makebox{ is closed} \}$,

${\bf R_{9}} = \{T \in \mathcal{B}(X): c_{n}^{'}(T) < \infty
\makebox{ for every } n \in \mathbb{N} \makebox{ and } \mathcal
{R}(T) \makebox{ is closed} \} $,

${\bf R_{10}} = \{T \in \mathcal{B}(X): \makebox{there exists } d
\in \mathbb{N} \makebox{ such that } c_{d}^{'}(T) < \infty \makebox{
and}$ $\mathcal {R}(T^{d+1}) \makebox{ is closed} \}$,

${\bf R_{11}} = \{T \in \mathcal{B}(X): k(T) = 0  \makebox{ and }
\mathcal {R}(T) \makebox{ is closed} \}$,

${\bf R_{12}} = \{T \in \mathcal{B}(X): k(T) < \infty  \makebox{ and
} \mathcal {R}(T) \makebox{ is closed} \}$,

${\bf R_{13}} =\{T \in \mathcal{B}(X): \makebox{there exists } d \in
\mathbb{N} \makebox{ such that } k_{n}(T) = 0 \makebox{ for every }
n \geq d  \makebox{ and}$ $\mathcal {R}(T^{d+1}) \makebox{ is}$
$\makebox{closed} \}$,

${\bf R_{14}} = \{T \in \mathcal{B}(X): k_{n}(T) < \infty \makebox{
for every } n \in \mathbb{N} \makebox{ and } \mathcal {R}(T)$
$\makebox{is} \makebox{closed} \}$,

${\bf R_{15}} = \{T \in \mathcal{B}(X):  \makebox{there exists } d
\in \mathbb{N} \makebox{ such that } k_{n}(T) < \infty \makebox{ for
every } n \geq d \makebox{ and } $ $ \mathcal {R}(T^{d+1})$ $
\makebox{ is closed} \}$.

${\bf R_{16}} = \{T \in \mathcal{B}(X):  \makebox{there exists } d
\in \mathbb{N} \makebox{ such that } c_{d}(T) = 0 \makebox{ and}$
$\mathcal {R}(T) + \mathcal {N}(T^{d}) \makebox{ is} $ $
\makebox{closed} \}$,

${\bf R_{17}} = \{T \in \mathcal{B}(X): \makebox{there exists } d
\in \mathbb{N} \makebox{ such that } c_{d}(T) < \infty \makebox{
and}$ $\mathcal {R}(T) + \mathcal {N}(T^{d}) \makebox{ is} $ $
\makebox{closed} \}$,

${\bf R_{18}} = \{T \in \mathcal{B}(X):
\makebox{there exists } d \in \mathbb{N} \makebox{ such that }
k_{n}(T) = 0 \makebox{ for every } n \geq d, \makebox{ and}$
$\mathcal {R}(T) + \mathcal {N}(T^{d}) \makebox{ is closed} \}$,

${\bf R_{19}} = \{T \in \mathcal{B}(X): \makebox{there exists } d
\in \mathbb{N} \makebox{ such that } k_{n}(T) < \infty \makebox{ for
every } n \geq d, \makebox{ and } $ $\mathcal {R}(T) + \mathcal
{N}(T^{d}) \makebox{ is}$ $ \makebox{closed} \}$.
\end{definition}

We remark that $\bf R_{1} \subseteq \bf R_{2}  = \bf R_{3} \cap \bf R_{4}
\subseteq \bf R_{3} \cup \bf R_{4}  \subseteq \bf R_{5} \subseteq
\bf R_{13} \subseteq \bf R_{18}$, $\bf R_{3} \subseteq \bf R_{16}$, $\bf R_{5} \subseteq \bf R_{17}$, $\bf R_{6} \subseteq \bf R_{7} = \bf R_{8} \cap \bf R_{9} \subseteq \bf R_{8} \cup \bf R_{9}  \subseteq \bf R_{10}
\subseteq \bf R_{13}$, $\bf R_{11} \subseteq \bf R_{12} = \bf
R_{13} \cap \bf R_{14} \subseteq \bf R_{13} \cup \bf R_{14}
\subseteq \bf R_{15} \subseteq \bf R_{19}$. The operators of $\bf R_{1}$, $\bf R_{2}$, $\bf R_{3}$, $\bf R_{4}$ and $\bf R_{5}$ are called surjective, lower semi-Browder, right Drazin invertible, lower semi-Fredholm and right essentially Drazin invertible operators, respectively.
The operators of $\bf R_{6}$, $\bf R_{7}$, $\bf R_{8}$, $\bf R_{9}$ and $\bf R_{10}$ are called bounded below, upper semi-Browder, left Drazin invertible, upper semi-Fredholm and left essentially Drazin invertible operators, respectively. The operators of $\bf R_{11}$, $\bf R_{12}$ and $\bf R_{13}$ are called semi-regular, essentially semi-regular and quasi-Fredholm operators, respectively. The operators of $\bf R_{18}$ are called operators with eventual topological uniform descent.

The main result of this note establishes that if $A \in \mathcal{B}(X,Y)$ and $B, C \in \mathcal{B}(Y,X)$ satisfy (\ref{eq1.2}), then
 $$AC-I \in {\bf R_{i}} \Longleftrightarrow BA-I \in {\bf R_{i}}, \ \ 1 \leq i \leq 19.$$
It not only extends our previous corresponding results in [\cite{Zeng-Zhong-RS-SR}] from the special case
$$B=C$$
to the general case,
but also supplements the results obtained by Corach, Duggal and Harte in [\cite{Corach-Duggal-Harte}] from the viewpoint of spectral theory.

\section{Main result}

Throughout this section, we assume that $A \in \mathcal{B}(X,Y)$ and $B, C \in \mathcal{B}(Y,X)$ satisfy (\ref{eq1.2}). We begin with the following lemma, which gives an affirmative answer to one question posed by Corach et al. in [\cite{Corach-Duggal-Harte}, p. 526].

\begin {lemma} \label{2.1}

$\mathcal {R}(AC-I)$ is closed if and only if $\mathcal {R}(BA-I)$ is closed.

\end{lemma}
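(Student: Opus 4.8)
The plan is to reduce the general situation governed by $ABA=ACA$ to the classical Jacobson setting, in which one compares $CA-I$ with $AC-I$. The bridge is a purely algebraic factorization of $I-BA$ that isolates the discrepancy between $B$ and $C$ into an invertible factor, so that $BA-I$ and $CA-I$ turn out to have exactly the same range; closedness of range is then governed entirely by the classical pair $CA$, $AC$.

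First I would introduce $N:=(B-C)A\in\mathcal{B}(X)$ and exploit (\ref{eq1.2}) in the form $A(B-C)A=ABA-ACA=0$. From this single identity two facts follow at once: $N^{2}=(B-C)\big(A(B-C)A\big)=0$, so $N$ is nilpotent of order two and $I-N$ is invertible with $(I-N)^{-1}=I+N$; and $(CA)N=C\big(A(B-C)A\big)=0$. Consequently
\begin{equation*}
(I-CA)(I-N)=I-CA-N+(CA)N=I-\big(CA+(B-C)A\big)=I-BA .
\end{equation*}
Since $I-N$ is a bijection of $X$, right multiplication by it does not alter the range, so $\mathcal{R}(BA-I)=\mathcal{R}(I-BA)=\mathcal{R}(I-CA)=\mathcal{R}(CA-I)$. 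In particular $\mathcal{R}(BA-I)$ is closed if and only if $\mathcal{R}(CA-I)$ is closed.

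It then remains to compare $CA-I$ with $AC-I$, which is exactly the special case $B=C$ of the assertion, i.e. the classical Jacobson configuration. Here the two intertwining relations $A(I-CA)=(I-AC)A$ and $C(I-AC)=(I-CA)C$ are available, and the equivalence ``$\mathcal{R}(CA-I)$ closed $\iff\mathcal{R}(AC-I)$ closed'' is the known closed-range analogue of Jacobson's lemma already treated in [\cite{Zeng-Zhong-RS-SR}]; if one prefers a self-contained argument, it can be obtained by estimating the reduced minimum moduli of $I-AC$ and $I-CA$ against one another. Chaining the two equivalences gives $\mathcal{R}(AC-I)$ closed $\iff\mathcal{R}(CA-I)$ closed $\iff\mathcal{R}(BA-I)$ closed, which is the statement.

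The conceptually delicate point is not the factorization, which is essentially forced by $A(B-C)A=0$, but the classical step relating $CA-I$ and $AC-I$: closedness of range is not transported by an arbitrary intertwining, so one genuinely needs the two-sided relations above (equivalently, the symmetry $A\leftrightarrow C$, $X\leftrightarrow Y$, which collapses the biconditional to a single implication). Once that classical fact is in hand, the square-zero factorization does all the remaining work and carries it over to the general operator equation (\ref{eq1.2}).
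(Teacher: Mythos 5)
Your argument is correct, and it takes a genuinely different route from the paper's. Your key step is the factorization $I-BA=(I-CA)(I-N)$ with $N=(B-C)A$, which is valid: equation (\ref{eq1.2}) gives $A(B-C)A=0$, hence $N^{2}=0$ and $CAN=0$, and the displayed identity checks out; since $I-N$ is bijective you get the \emph{exact} equality $\mathcal{R}(BA-I)=\mathcal{R}(CA-I)$, which is stronger than simultaneous closedness and reduces the lemma to the classical pair $CA-I$, $AC-I$ (i.e.\ [\cite{Barnes}, Theorem 5], which the paper itself invokes). The paper instead proves the implication ``$\mathcal{R}(AC-I)$ closed $\Rightarrow$ $\mathcal{R}(BA-I)$ closed'' by a direct sequence argument — taking $x_{n}=(BA-I)z_{n}\to x$, solving $Ax=(AC-I)y$, and exhibiting $x=(BA-I)(BACy-BAx-x)$ via repeated use of $ABA=ACA$ — and then obtains the converse from [\cite{Barnes}, Theorem 5] together with the $B\leftrightarrow C$ symmetry. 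Your factorization is slicker for this particular statement, but note its limitation: $I-N$ need not commute with $I-CA$, so $(I-BA)^{n}$ does not factor as $(I-CA)^{n}(I-N)^{n}$, and the trick does not directly yield the higher-order statements (Lemmas \ref{2.2} and \ref{2.4}--\ref{2.7}) concerning $\mathcal{R}((BA-I)^{n})$ and $\mathcal{R}(BA-I)+\mathcal{N}((BA-I)^{d})$; the paper's explicit computation is the template that generalizes to those settings. One cosmetic point: the closed-range Jacobson equivalence for $CA-I$ versus $AC-I$ is most precisely attributed to [\cite{Barnes}, Theorem 5] rather than to [\cite{Zeng-Zhong-RS-SR}], though either source suffices.
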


\begin{proof} We give the proof by taking the argument in [\cite{Corach-Duggal-Harte}, pp. 526-527] one step further. Suppose that $\mathcal {R}(AC-I)$ is closed. Assume that there exists a sequence
$\{x_{n}\}_{n=1}^{\infty} \subseteq \mathcal {R}(BA-I)$ such that $x_{n} \rightarrow x$. Then, for each
positive integer $n$, there exists $z_{n} \in X$ such that $x_{n}=(BA-I)z_{n}$. Hence
$$Ax=\lim\limits_{n \rightarrow \infty}Ax_{n}=\lim\limits_{n \rightarrow \infty}A(BA-I)z_{n} =\lim\limits_{n \rightarrow \infty}(AC-I)A z_{n}.$$
Since $\mathcal {R}(AC-I)$ is closed, there exists $y \in Y$ such that
$$Ax = (AC-I)y,$$
and hence $y=ACy-Ax.$ Therefore,
\begin{align*} \qquad\qquad
      x  &= BAx-(BA-I)x \\
         &= B(AC-I)y-(BA-I)x  \\
         &= (BAC-B)(ACy-Ax)-(BA-I)x \\
         &= BACACy-BACAx-BACy+BAx-(BA-I)x \\
         &= BABACy-BABAx-BACy+BAx-(BA-I)x \\
         &= (BA-I)(BACy-BAx-x),
     \end{align*}
that is, $x \in \mathcal {R}(BA-I)$. Consequently, $\mathcal {R}(BA-I)$ is closed.

Conversely, suppose that $\mathcal {R}(BA-I)$ is closed. Then we have
    \begin{align*}
  \mathcal {R}(BA-I) \makebox{ is closed} &\Longleftrightarrow \mathcal {R}(AB-I) \makebox{ is closed} \ \ (\mathrm{by \ [\cite{Barnes}, \  Theorem \ 5]})\\
         &\Longrightarrow \mathcal {R}(CA-I) \makebox{ is closed}  \\
          &\qquad\,(\mathrm{by \ the \ previous \ paragraph \ and \ interchanging \ } B \mathrm{ \ and \ } C) \\
         &\Longleftrightarrow \mathcal {R}(AC-I) \makebox{ is closed}. \ \ (\mathrm{by \ [\cite{Barnes}, \ Theorem \ 5]})
     \end{align*}
Hence, $\mathcal {R}(AC-I)$ is closed $\Longleftrightarrow$ $\mathcal {R}(BA-I)$ is closed.
\end{proof}

\begin {lemma} \label{2.2}

For all $n \in \mathbb{N}$, $\mathcal {R}((AC-I)^{n})$ is closed if and only if $\mathcal {R}((BA-I)^{n})$ is closed.

\end{lemma}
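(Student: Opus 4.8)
The plan is to bootstrap from Lemma~\ref{2.1} by rewriting the $n$-th powers $(AC-I)^{n}$ and $(BA-I)^{n}$ as \emph{first}-power expressions $A\mathbf{C}-I$ and $\mathbf{B}A-I$ for a new triple $(A,\mathbf{B},\mathbf{C})$ that still satisfies the governing operator equation. First I would introduce the polynomial $p(t)=\sum_{k=1}^{n}\binom{n}{k}(-1)^{k+1}t^{k-1}$ and set $\mathbf{B}=Bp(AB)\in\mathcal{B}(Y,X)$ and $\mathbf{C}=p(CA)C\in\mathcal{B}(Y,X)$. Expanding the binomial and using $(AC)^{k}=A(CA)^{k-1}C$ and $(BA)^{k}=B(AB)^{k-1}A$ for $k\geq 1$, one checks the identities $(I-AC)^{n}=I-A\mathbf{C}$ and $(I-BA)^{n}=I-\mathbf{B}A$, whence $(AC-I)^{n}=(-1)^{n+1}(A\mathbf{C}-I)$ and $(BA-I)^{n}=(-1)^{n+1}(\mathbf{B}A-I)$. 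In particular $\mathcal{R}((AC-I)^{n})=\mathcal{R}(A\mathbf{C}-I)$ and $\mathcal{R}((BA-I)^{n})=\mathcal{R}(\mathbf{B}A-I)$, so it suffices to prove that $\mathcal{R}(A\mathbf{C}-I)$ is closed if and only if $\mathcal{R}(\mathbf{B}A-I)$ is closed.

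To invoke Lemma~\ref{2.1} for the triple $(A,\mathbf{B},\mathbf{C})$ --- whose proof uses only the relation $ABA=ACA$ and is therefore valid for any bounded operators satisfying the analogous equation --- the key step is to verify $A\mathbf{B}A=A\mathbf{C}A$. Using the elementary intertwining $Aq(CA)=q(AC)A$ and $q(AB)A=Aq(BA)$, valid for every polynomial $q$, I would reduce the two sides to $A\mathbf{C}A=p(AC)(ACA)$ and $A\mathbf{B}A=(ACA)p(BA)$; after expanding $p$, the desired equality becomes the family of identities $(AC)^{k}A=(AC)(AB)^{k-1}A$ for $k\geq 1$.

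The main obstacle, and the heart of the argument, is proving this last family. I expect to establish first the auxiliary identity $(AC)(AB)^{m}A=(AB)^{m+1}A$ for all $m\geq 0$ directly from $ABA=ACA$: for $m=0$ it is exactly $ACA=ABA$, while for $m\geq 1$ one writes $(AC)(AB)^{m}A=(ACA)B(AB)^{m-1}A=(ABA)B(AB)^{m-1}A=(AB)^{m+1}A$. Iterating this identity converts each leading factor $AC$ into $AB$ and yields $(AC)^{k}A=(AC)(AB)^{k-1}A$, which completes the verification of $A\mathbf{B}A=A\mathbf{C}A$. With the operator equation thus established for $(A,\mathbf{B},\mathbf{C})$, Lemma~\ref{2.1} gives that $\mathcal{R}(A\mathbf{C}-I)$ is closed if and only if $\mathcal{R}(\mathbf{B}A-I)$ is closed, which by the range identifications above is precisely the assertion that $\mathcal{R}((AC-I)^{n})$ is closed if and only if $\mathcal{R}((BA-I)^{n})$ is closed.
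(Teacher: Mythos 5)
Your proposal is correct and follows essentially the same route as the paper: both rewrite $(I-AC)^{n}$ and $(I-BA)^{n}$ as $I-A\mathbf{C}$ and $I-\mathbf{B}A$ for polynomially modified operators $\mathbf{B},\mathbf{C}$ (the paper's $B_n$, $C_n$ are exactly $Bp(AB)$ and $p(CA)C$ up to reindexing), check the operator equation for the new triple, and invoke Lemma~\ref{2.1}. Your write-up actually supplies the verification of $A\mathbf{B}A=A\mathbf{C}A$, which the paper asserts without proof, and that verification is sound.
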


\begin{proof} For each $n \in \mathbb{N}$, let
  $$B_{n} = \sum_{k=1}^{n+1}(-1)^{k-1} \binom{n+1}{k} B(AB)^{k-1} $$
  and
  $$C_{n} = \sum_{k=1}^{n+1}(-1)^{k-1} \binom{n+1}{k} C(AC)^{k-1}.$$
  Then we have
   $$AB_{n}A =AC_{n}A,$$
   $$(I-AC)^{n+1} =I-AC_{n}$$
   and
   $$(I-BA)^{n+1} =I-B_{n}A.$$
   Applying Lemma \ref{2.1}, we get
   $$\mathcal {R}((AC-I)^{n}) \makebox{ is closed} \Longleftrightarrow \mathcal {R}((BA-I)^{n}) \makebox{ is closed},$$
   for all $n \in \mathbb{N}$.
\end{proof}

The next lemma is essential to the sequel crucial lemmas \ref{2.4}--\ref{2.7}.

\begin {lemma} \label{2.3} For all $d \in \mathbb{N}$, we have

$(1)$ $A\mathcal {R}((BA-I)^{d}) \subseteq \mathcal {R}((AC-I)^{d});$

$(2)$ $A\mathcal {N}((BA-I)^{d}) \subseteq \mathcal {N}((AC-I)^{d});$

$(3)$ $BAC\mathcal {N}((AC-I)^{d}) \subseteq \mathcal {N}((BA-I)^{d});$

$(4)$ $BAC\mathcal {R}((AC-I)^{d}) \subseteq \mathcal {R}((BA-I)^{d}).$

\end{lemma}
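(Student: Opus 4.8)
The plan is to reduce all four inclusions to two algebraic intertwining identities and then promote them to $d$-th powers by a routine induction. The driving observation is that the hypothesis $ABA = ACA$ is exactly an intertwining of $BA$ (on $X$) and $AC$ (on $Y$) through $A$: regrouping gives $A(BA) = ABA = ACA = (AC)A$, so that
$$A(BA - I) = (AC - I)A.$$
For the reverse direction I would use the composite $BAC : Y \to X$, the natural map carrying the $AC$-world back to the $BA$-world. Here the point is to verify $(BA - I)(BAC) = (BAC)(AC - I)$, which after expanding both sides collapses to the single equality $BABAC = BACAC$; and this is forced by the hypothesis since $BABAC - BACAC = B(ABA - ACA)C = 0$.

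Once these two base relations are available, induction on $d$ upgrades them to
$$A(BA - I)^{d} = (AC - I)^{d}A \qquad \text{and} \qquad (BA - I)^{d}(BAC) = (BAC)(AC - I)^{d}.$$
For the first, the inductive step reads $A(BA-I)^{d} = (AC-I)A(BA-I)^{d-1} = (AC-I)(AC-I)^{d-1}A = (AC-I)^{d}A$, and the second is entirely parallel.

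With the two power identities in hand, each inclusion is immediate. For $(1)$, $A\mathcal{R}((BA-I)^{d}) = A(BA-I)^{d}X = (AC-I)^{d}(AX) \subseteq (AC-I)^{d}Y = \mathcal{R}((AC-I)^{d})$. For $(2)$, if $(BA-I)^{d}x = 0$ then $(AC-I)^{d}(Ax) = A(BA-I)^{d}x = 0$, so $Ax \in \mathcal{N}((AC-I)^{d})$. Symmetrically, from the second identity: for $(4)$, $BAC\,\mathcal{R}((AC-I)^{d}) = BAC(AC-I)^{d}Y = (BA-I)^{d}(BAC\,Y) \subseteq \mathcal{R}((BA-I)^{d})$; and for $(3)$, if $(AC-I)^{d}y = 0$ then $(BA-I)^{d}(BACy) = BAC(AC-I)^{d}y = 0$, so $BACy \in \mathcal{N}((BA-I)^{d})$.

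The only genuine content is locating the correct second intertwining. The identity $A(BA-I) = (AC-I)A$ is handed to us by the hypothesis and is what powers $(1)$ and $(2)$, but on its own it yields nothing in the reverse direction. The main obstacle is therefore to find the right operator $Y \to X$ that intertwines $AC - I$ with $BA - I$: the obvious candidate $C$ fails, since $C(AC-I) = (CA-I)C$ intertwines $AC$ only with $CA$, and $CA \neq BA$ in general. One is thus led to $BAC$, whose intertwining property rests precisely on the cancellation $B(ABA-ACA)C = 0$. After that choice is made, everything is mechanical.
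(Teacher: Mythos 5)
Your proposal is correct and follows essentially the same route as the paper: both arguments rest on the two intertwining identities $A(BA-I)^{d}=(AC-I)^{d}A$ and $(BA-I)^{d}BAC=BAC(AC-I)^{d}$, with $BAC$ as the reverse intertwiner, the only difference being that you spell out the induction on $d$ that the paper leaves implicit.
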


\begin{proof}  (1) Let $x \in \mathcal {R}((BA-I)^{d})$. Then there exists $x_{0} \in X$ such that $x=(BA-I)^{d}x_{0}$, hence
 $$Ax=A(BA-I)^{d}x_{0}=(AC-I)^{d}Ax_{0} \in \mathcal {R}((AC-I)^{d}).$$
 Therefore, $A\mathcal {R}((BA-I)^{d}) \subseteq \mathcal {R}((AC-I)^{d}).$

 (2) Let $x \in \mathcal {N}((BA-I)^{d})$. Then we have
    $$(AC-I)^{d}Ax=A(BA-I)^{d}x=0.$$
    Thus $Ax \in \mathcal {N}((AC-I)^{d})$ and this shows (2).

 (3) Let $y \in \mathcal {N}((AC-I)^{d})$. Then we have
    $$(BA-I)^{d}BACy=BAC(AC-I)^{d}y=0.$$
    Thus $BACy \in \mathcal {N}((BA-I)^{d})$ and this shows (3).

 (4) Let $y \in \mathcal {R}((AC-I)^{d})$. Then there exists $y_{0} \in Y$ such that $y=(AC-I)^{d}y_{0}$, hence
    $$BACy=BAC(AC-I)^{d}y_{0}=(BA-I)^{d}BACy_{0} \in \mathcal {R}((BA-I)^{d}).$$
     Therefore, $BAC\mathcal {R}((AC-I)^{d}) \subseteq \mathcal {R}((BA-I)^{d}).$
\end{proof}

The proofs of the following lemmas \ref{2.4}--\ref{2.7} are dependent heavily on the special case
$$B=C,$$
which we proved recently in [\cite{Zeng-Zhong-RS-SR}].

\begin {lemma} \label{2.4}

For all $d \in \mathbb{N}$, $\mathcal {R}(AC-I) + \mathcal {N}((AC-I)^{d})$ is closed if and only if $\mathcal {R}(BA-I) + \mathcal {N}((BA-I)^{d})$ is closed.

\end{lemma}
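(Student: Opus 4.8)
The plan is to mirror the two-part structure of Lemma \ref{2.1}: prove one implication by a direct computation that upgrades the argument used there, and obtain the reverse implication by a symmetry chain that leans on the special case $B=C$ established in [\cite{Zeng-Zhong-RS-SR}].

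For the forward implication, suppose $\mathcal{R}(AC-I) + \mathcal{N}((AC-I)^{d})$ is closed and take a sequence $\{x_{n}\} \subseteq \mathcal{R}(BA-I) + \mathcal{N}((BA-I)^{d})$ with $x_{n} \to x$. Writing $x_{n} = (BA-I)p_{n} + q_{n}$ with $q_{n} \in \mathcal{N}((BA-I)^{d})$, I would apply $A$ and use $A(BA-I) = (AC-I)A$ together with Lemma \ref{2.3}(2) to see that $Ax_{n} = (AC-I)Ap_{n} + Aq_{n} \in \mathcal{R}(AC-I) + \mathcal{N}((AC-I)^{d})$. By continuity $Ax_{n} \to Ax$, so closedness yields $Ax = (AC-I)w + z$ with $z \in \mathcal{N}((AC-I)^{d})$. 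The heart of the matter is then an algebraic identity: carrying the extra kernel term $z$ through the factorization used in Lemma \ref{2.1}, and using $BACAC = BABAC$ and $BACA = BABA$ (both consequences of $ABA = ACA$), I expect to reach
\begin{equation*}
x = (BA-I)\bigl[BACw - BAx - x\bigr] + BACz .
\end{equation*}
Since $BACz \in \mathcal{N}((BA-I)^{d})$ by Lemma \ref{2.3}(3), this exhibits $x$ as an element of $\mathcal{R}(BA-I) + \mathcal{N}((BA-I)^{d})$, so that subspace is closed.

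For the reverse implication I would run the symmetry argument from Lemma \ref{2.1}. The special case $B=C$ of [\cite{Zeng-Zhong-RS-SR}], applied to the single operators $B$ and $C$ respectively, supplies
\begin{align*}
\mathcal{R}(BA-I) + \mathcal{N}((BA-I)^{d}) \text{ closed} &\Longleftrightarrow \mathcal{R}(AB-I) + \mathcal{N}((AB-I)^{d}) \text{ closed}, \\
\mathcal{R}(CA-I) + \mathcal{N}((CA-I)^{d}) \text{ closed} &\Longleftrightarrow \mathcal{R}(AC-I) + \mathcal{N}((AC-I)^{d}) \text{ closed}.
\end{align*}
Interchanging $B$ and $C$ in the forward implication already proved gives the middle link $\mathcal{R}(AB-I) + \mathcal{N}((AB-I)^{d})$ closed $\Longrightarrow \mathcal{R}(CA-I) + \mathcal{N}((CA-I)^{d})$ closed, and stringing these three steps together closes the equivalence.

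The main obstacle I anticipate is the algebraic bookkeeping in the forward step: one must route the kernel component $z$ through the operator $BAC$ so that the residual term lands \emph{precisely} in $\mathcal{N}((BA-I)^{d})$, rather than in $\mathcal{N}((BA-I)^{d'})$ for some larger $d'$. Verifying that the collapses $BACAC = BABAC$ and $BACA = BABA$ reduce the computation to the clean factorization displayed above while keeping the $z$-term isolated is the delicate point; everything else is a faithful transcription of the reasoning in Lemma \ref{2.1}.
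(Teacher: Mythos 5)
Your proposal is correct and follows essentially the same route as the paper: the forward direction pushes the convergent sequence through $A$ via Lemma \ref{2.3}(1)--(2), recovers $x = (BA-I)(BACw-BAx-x)+BACz$ by exactly the algebraic collapse $BACA=BABA$ the paper uses, and places $BACz$ in $\mathcal{N}((BA-I)^{d})$ via Lemma \ref{2.3}(3); the reverse direction is the same three-link chain through $AB-I$ and $CA-I$ using [\cite{Zeng-Zhong-RS-SR}, Lemma 3.11]. The identity you ``expect to reach'' is precisely the one the paper verifies, so the anticipated bookkeeping obstacle does not materialize.
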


\begin{proof} Suppose that $\mathcal {R}(AC-I) + \mathcal {N}((AC-I)^{d})$ is closed. Assume that there exists a sequence
$\{x_{n}\}_{n=1}^{\infty} \subseteq \mathcal {R}(BA-I) + \mathcal {N}((BA-I)^{d})$ such that $x_{n} \rightarrow x$. Then, for each
positive integer $n$, there exist $y_{n} \in \mathcal {R}(BA-I)$ and $z_{n} \in \mathcal {N}((BA-I)^{d})$ such that $x_{n}=y_{n}+z_{n}$. Hence
$$Ax=\lim\limits_{n \rightarrow \infty}Ax_{n}=\lim\limits_{n \rightarrow \infty}A(y_{n}+z_{n}).$$
Parts (1) and (2) of Lemma \ref{2.3} imply that
$$Ay_{n} \in \mathcal {R}(AC-I)$$
and
$$Az_{n} \in \mathcal {N}((AC-I)^{d}).$$
Since $\mathcal {R}(AC-I) + \mathcal {N}((AC-I)^{d})$ is closed, there exist $y \in Y$ and $z \in \mathcal {N}((AC-I)^{d})$ such that
$$Ax = (AC-I)y+z,$$
and hence $y=ACy+z-Ax.$ Therefore,
\begin{align*}
      x  &= BAx-(BA-I)x \\
         &= B[(AC-I)y+z]-(BA-I)x  \\
         &= (BAC-B)(ACy+z-Ax)+Bz-(BA-I)x \\
         &= BACACy+BACz-BACAx-BACy-Bz+BAx+Bz-(BA-I)x \\
         &= BABACy+BACz-BABAx-BACy+BAx-(BA-I)x \\
         &= (BA-I)(BACy-BAx-x)+BACz.
     \end{align*}
And then, since $z \in \mathcal {N}((AC-I)^{d})$, we get $ x  \in \mathcal {R}(BA-I) + \mathcal {N}((BA-I)^{d})$ by Lemma \ref{2.3}(3).
Consequently, $\mathcal {R}(BA-I) + \mathcal {N}((BA-I)^{d})$ is closed.

Conversely, suppose that $\mathcal {R}(BA-I) + \mathcal {N}((BA-I)^{d})$ is closed. Then we have
    \begin{align*}\qquad\,
  &\qquad\, \mathcal {R}(BA-I) + \mathcal {N}((BA-I)^{d}) \makebox{ is closed} \\
  & \Longleftrightarrow \mathcal {R}(AB-I) + \mathcal {N}((AB-I)^{d}) \makebox{ is closed} \ \ (\mathrm{by \ [\cite{Zeng-Zhong-RS-SR}, \  Lemma \ 3.11]})\\
  &\Longrightarrow \mathcal {R}(CA-I)+ \mathcal {N}((CA-I)^{d}) \makebox{ is closed}  \\
  &\qquad\,(\mathrm{by \ the \ previous \ paragraph \ and \ interchanging \ } B \mathrm{ \ and \ } C) \\
  &\Longleftrightarrow \mathcal {R}(AC-I)+ \mathcal {N}((AC-I)^{d}) \makebox{ is closed}. \ \ (\mathrm{by \ [\cite{Zeng-Zhong-RS-SR}, \ Lemma \ 3.11]})
     \end{align*}
Hence, $\mathcal {R}(AC-I) + \mathcal {N}((AC-I)^{d})$ is closed $\Longleftrightarrow$ $\mathcal {R}(BA-I) + \mathcal {N}((BA-I)^{d})$ is closed.
\end{proof}

\begin {lemma}${\label{2.5}}$ $c_{n}^{'}(AC-I)=c_{n}^{'}(BA-I)$ for all $n \in \mathbb{N}$. Consequently, $c^{'}(AC-I)=c^{'}(BA-I)$.
\end{lemma}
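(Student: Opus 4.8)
The plan is to read off the quantity $c_{n}^{'}$ from the formula $c_{n}^{'}(T)=\dim\big(\mathcal{N}(T)\cap\mathcal{R}(T^{n})\big)$ recorded in the introduction, and then to prove the sharper statement that the two subspaces $\mathcal{N}(BA-I)\cap\mathcal{R}((BA-I)^{n})$ and $\mathcal{N}(AC-I)\cap\mathcal{R}((AC-I)^{n})$ are linearly isomorphic. The natural candidates for the isomorphism are the restrictions of $A$ and of $BAC$, since Lemma \ref{2.3} was arranged precisely so that these operators respect both the kernels and the ranges in question. Concretely, I would set $\Phi=A|_{\mathcal{N}(BA-I)\cap\mathcal{R}((BA-I)^{n})}$ and $\Psi=BAC|_{\mathcal{N}(AC-I)\cap\mathcal{R}((AC-I)^{n})}$.

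First I would check that $\Phi$ and $\Psi$ land in the intended subspaces. Parts $(2)$ and $(1)$ of Lemma \ref{2.3} (taken with $d=1$ and $d=n$ respectively) show that $A$ maps $\mathcal{N}(BA-I)\cap\mathcal{R}((BA-I)^{n})$ into $\mathcal{N}(AC-I)\cap\mathcal{R}((AC-I)^{n})$, so $\Phi$ is well defined; parts $(3)$ and $(4)$ do the same for $\Psi$. The crux is then a short computation showing that $\Phi$ and $\Psi$ are mutually inverse. From the hypothesis $ABA=ACA$ one obtains the two identities $BACA=BABA=(BA)^{2}$ and $ABAC=ACAC=(AC)^{2}$. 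Hence, for $x\in\mathcal{N}(BA-I)$ (so that $BAx=x$) we have $\Psi\Phi(x)=BACAx=(BA)^{2}x=x$, and for $y\in\mathcal{N}(AC-I)$ (so that $ACy=y$) we have $\Phi\Psi(y)=ABACy=(AC)^{2}y=y$.

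It follows that $\Phi$ and $\Psi$ are mutually inverse linear isomorphisms between $\mathcal{N}(BA-I)\cap\mathcal{R}((BA-I)^{n})$ and $\mathcal{N}(AC-I)\cap\mathcal{R}((AC-I)^{n})$, whence these spaces have equal dimension and $c_{n}^{'}(AC-I)=c_{n}^{'}(BA-I)$ for every $n\in\mathbb{N}$. Summing over $n$ and using $c^{'}(T)=\sum_{n=0}^{\infty}c_{n}^{'}(T)$ then yields $c^{'}(AC-I)=c^{'}(BA-I)$. I expect the only point requiring genuine care to be the verification that $\Phi$ and $\Psi$ actually take values in the correct intersection subspaces, which is exactly the content of Lemma \ref{2.3}; once that is in place, the relations $\Psi\Phi=\mathrm{id}$ and $\Phi\Psi=\mathrm{id}$ are immediate from $ABA=ACA$. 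An alternative route, matching the style of Lemmas \ref{2.1} and \ref{2.4}, would be to reduce to the already-established case $B=C$ by a Jacobson-type interchange of $B$ and $C$, but the direct isomorphism above avoids that detour.
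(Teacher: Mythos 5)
Your proof is correct, but it follows a genuinely different route from the paper's. The paper works with the quotient description $c_{n}^{'}(T)=\dim \mathcal{N}(T^{n+1})/\mathcal{N}(T^{n})$: it shows that the map induced by $A$ from $\mathcal{N}((BA-I)^{n+1})/\mathcal{N}((BA-I)^{n})$ to $\mathcal{N}((AC-I)^{n+1})/\mathcal{N}((AC-I)^{n})$ is well defined and injective, which only yields the one-sided inequality $c_{n}^{'}(BA-I)\leq c_{n}^{'}(AC-I)$; it then closes the loop by invoking the already-known special case $B=C$ (the Jacobson-type identity $c_{n}^{'}(AC-I)=c_{n}^{'}(CA-I)$ from [\cite{Zeng-Zhong-RS-SR}, Lemma 3.10]) together with the interchange of $B$ and $C$ --- exactly the ``alternative route'' you mention at the end. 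You instead use the intersection description $c_{n}^{'}(T)=\dim\bigl(\mathcal{N}(T)\cap\mathcal{R}(T^{n})\bigr)$ and exhibit $A$ and $BAC$ as mutually inverse isomorphisms between the two intersection subspaces; the verifications are sound: Lemma~\ref{2.3} gives that both restrictions land in the right subspaces, and the identities $BACA=BABA$ and $ABAC=ACAC$ (both immediate from $ABA=ACA$) give $\Psi\Phi=\mathrm{id}$ on $\mathcal{N}(BA-I)$ and $\Phi\Psi=\mathrm{id}$ on $\mathcal{N}(AC-I)$. What your argument buys is self-containedness and symmetry --- you get the equality in one stroke without appealing to the external Lemma 3.10 of [\cite{Zeng-Zhong-RS-SR}] or to Jacobson-type symmetry; what the paper's template buys is uniformity, since the same ``injective induced map plus $B=C$ case plus interchange'' scheme is reused verbatim for $c_{n}$ and $k_{n}$ in Lemmas~\ref{2.6} and~\ref{2.7}, where an explicit two-sided inverse is less immediately visible (though your $\Phi,\Psi$ would in fact also induce inverse maps on those quotients).
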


\begin{proof}  Let $\widehat{A}(c_{n}^{'})$ be the linear mapping
induced by $A$ from $$\mathcal {N}((BA-I)^{n+1})/ \mathcal
{N}((BA-I)^{n})$$ to $$\mathcal {N}((AC-I)^{n+1}) / \mathcal
{N}((AC-I)^{n}).$$
Since $A\mathcal {N}((BA-I)^{n}) \subseteq \mathcal{N}((AC-I)^{n})$ (see Lemma \ref{2.3}(2)),
we thus know that $\widehat{A}(c_{n}^{'})$ is well defined.

Next, we show that $\widehat{A}(c_{n}^{'})$ is injective. In fact, let $x \in
\mathcal {N}((BA-I)^{n+1})$ and $Ax \in \mathcal{N}((AC-I)^{n})$. Then by Lemma \ref{2.3}(3), we have $BACAx \in \mathcal{N}((BA-I)^{n}).$
Hence, \begin{align*}\qquad\,\qquad\,\qquad\,
  x&= BAx -(BA-I)x \\
   &=BAx-BACAx+BACAx -(BA-I)x \\
   & =BAx-BABAx+BACAx -(BA-I)x \\
   &=BA(I-BA)x+BACAx-(BA-I)x   \\
   &\in \mathcal{N}((BA-I)^{n}).
     \end{align*}
Therefore,
$\widehat{A}(c_{n}^{'})$ is injective.

Hence,  \begin{align*} \qquad\,
  c_{n}^{'}(BA-I) &\leq c_{n}^{'}(AC-I) \ \ (\mathrm{by \ the \ previous \ paragraph})\\
         &=c_{n}^{'}(CA-I) \ \ (\mathrm{by \ [\cite{Zeng-Zhong-RS-SR}, \ Lemma \ 3.10]}) \\
         &\leq c_{n}^{'}(AB-I)  \\
         &\quad\,(\mathrm{by \ the \ previous \ paragraph \ and \ interchanging \ } B \mathrm{ \ and \ } C) \\
         &=c_{n}^{'}(BA-I) \ \ (\mathrm{by \ [\cite{Zeng-Zhong-RS-SR}, \ Lemma \ 3.10]})
     \end{align*}
So $c_{n}^{'}(AC-I) = c_{n}^{'}(BA-I)$ for all $n \in \mathbb{N}$.
\end{proof}

\begin {lemma}${\label{2.6}}$ $c_{n}(AC-I)=c_{n}(BA-I)$ for all $n \in
 \mathbb{N}$. Consequently, $c(AC-I)=c(BA-I)$.

\end{lemma}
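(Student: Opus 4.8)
The plan is to mirror the proof of Lemma \ref{2.5}, replacing the null-space quotients by range quotients and working with the characterization $c_{n}(T) = \dim \mathcal{R}(T^{n})/\mathcal{R}(T^{n+1})$. Concretely, for each $n \in \mathbb{N}$ I will consider the linear map $\widehat{A}(c_{n})$ induced by $A$ from $\mathcal{R}((BA-I)^{n})/\mathcal{R}((BA-I)^{n+1})$ into $\mathcal{R}((AC-I)^{n})/\mathcal{R}((AC-I)^{n+1})$, sending $x + \mathcal{R}((BA-I)^{n+1}) \mapsto Ax + \mathcal{R}((AC-I)^{n+1})$. That this is well defined follows at once from Lemma \ref{2.3}(1) applied with $d=n$ and $d=n+1$, since $A$ carries $\mathcal{R}((BA-I)^{n})$ into $\mathcal{R}((AC-I)^{n})$ and $\mathcal{R}((BA-I)^{n+1})$ into $\mathcal{R}((AC-I)^{n+1})$.

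The crux is to show that $\widehat{A}(c_{n})$ is injective, which will give $c_{n}(BA-I) \leq c_{n}(AC-I)$. So suppose $x \in \mathcal{R}((BA-I)^{n})$ with $Ax \in \mathcal{R}((AC-I)^{n+1})$, say $Ax = (AC-I)^{n+1}y_{0}$. Setting $y := (AC-I)^{n}y_{0} \in \mathcal{R}((AC-I)^{n})$ we have $Ax = (AC-I)y$, so the identity already established in the proof of Lemma \ref{2.1} (which is purely algebraic and uses only $ABA=ACA$) yields $x = (BA-I)(BACy - BAx - x)$. It then remains to check that the factor $w := BACy - BAx - x$ lies in $\mathcal{R}((BA-I)^{n})$: indeed $x \in \mathcal{R}((BA-I)^{n})$ by hypothesis, $BAx \in \mathcal{R}((BA-I)^{n})$ because $BA$ commutes with $(BA-I)^{n}$, and $BACy \in \mathcal{R}((BA-I)^{n})$ by Lemma \ref{2.3}(4) since $y \in \mathcal{R}((AC-I)^{n})$. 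Hence $x = (BA-I)w \in \mathcal{R}((BA-I)^{n+1})$, proving injectivity.

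Having $c_{n}(BA-I) \leq c_{n}(AC-I)$, I close the loop exactly as in Lemma \ref{2.5}. The special case $B=C$ proved in [\cite{Zeng-Zhong-RS-SR}] gives $c_{n}(AC-I)=c_{n}(CA-I)$ and $c_{n}(AB-I)=c_{n}(BA-I)$, while interchanging the (symmetric) roles of $B$ and $C$ in the injectivity argument gives $c_{n}(CA-I) \leq c_{n}(AB-I)$. Chaining these,
$$c_{n}(BA-I) \leq c_{n}(AC-I) = c_{n}(CA-I) \leq c_{n}(AB-I) = c_{n}(BA-I),$$
so every inequality is an equality and $c_{n}(AC-I)=c_{n}(BA-I)$ for all $n$. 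Summing over $n$ and recalling $c(T)=\sum_{n} c_{n}(T)$ then yields $c(AC-I)=c(BA-I)$.

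I expect the only genuine obstacle to be the injectivity step, and within it the bookkeeping that places $w$ in $\mathcal{R}((BA-I)^{n})$. The decisive point is the choice $y=(AC-I)^{n}y_{0}$ rather than an arbitrary preimage of $Ax$ under $(AC-I)$: this is precisely what makes $y \in \mathcal{R}((AC-I)^{n})$, so that Lemma \ref{2.3}(4) applies. Without it the Lemma \ref{2.1} identity delivers only $x \in \mathcal{R}(BA-I)$, which is too weak for the range-quotient comparison we need.
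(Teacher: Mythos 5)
Your proof is correct, but it follows a different route from the paper's. You work directly with the defining quotients $\mathcal{R}(T^{n})/\mathcal{R}(T^{n+1})$, whereas the paper uses Kaashoek's characterization $c_{n}(T)=\dim X/(\mathcal{R}(T)+\mathcal{N}(T^{n}))$ and lets $A$ induce a map from $X/(\mathcal{R}(BA-I)+\mathcal{N}((BA-I)^{n}))$ to $Y/(\mathcal{R}(AC-I)+\mathcal{N}((AC-I)^{n}))$; well-definedness then comes from Lemma \ref{2.3}(1)--(2), and injectivity is free, since it is literally the implication already established in the first paragraph of the proof of Lemma \ref{2.4} (if $Ax\in\mathcal{R}(AC-I)+\mathcal{N}((AC-I)^{n})$ then $x\in\mathcal{R}(BA-I)+\mathcal{N}((BA-I)^{n})$). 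The paper's choice thus buys economy: no new computation is needed. Your choice requires the extra idea you correctly identify as decisive --- selecting the preimage $y=(AC-I)^{n}y_{0}$ so that $y\in\mathcal{R}((AC-I)^{n})$ and Lemma \ref{2.3}(4) places $BACy$ in $\mathcal{R}((BA-I)^{n})$, which upgrades the Lemma \ref{2.1} identity $x=(BA-I)(BACy-BAx-x)$ from membership in $\mathcal{R}(BA-I)$ to membership in $\mathcal{R}((BA-I)^{n+1})$. In exchange your argument is self-contained (independent of Lemma \ref{2.4}) and is the more direct analogue of the paper's proof of Lemma \ref{2.7}, which also works with range intersections and invokes Lemma \ref{2.3}(4) in exactly this way. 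The closing chain of inequalities via [\cite{Zeng-Zhong-RS-SR}, Lemma 3.9] and the $B\leftrightarrow C$ symmetry is identical in both treatments.
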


\begin{proof} Let $\widehat{A}(c_{n})$ be the linear mapping induced
by $A$ from
$$X/ (\mathcal {R}(BA-I)+\mathcal{N}((BA-I)^{n})) $$
to
$$Y/ (\mathcal {R}(AC-I)+\mathcal{N}((AC-I)^{n})).$$
Since
$$A(\mathcal {R}(BA-I)+\mathcal{N}((BA-I)^{n})) \subseteq \mathcal {R}(AC-I)+\mathcal{N}((AC-I)^{n})$$
(by Lemma \ref{2.3}(1) and (2)), we thus know that $\widehat{A}(c_{n}^{'})$ is well defined.

Next, we show that $\widehat{A}(c_{n})$ is injective. In fact, let $x \in X$ and $Ax \in \mathcal {R}(AC-I)+\mathcal{N}((AC-I)^{n})$.
Then by the proof of the first paragraph in Lemma \ref{2.4}, we get $x \in \mathcal {R}(BA-I)+\mathcal{N}((BA-I)^{n}).$ Therefore, $\widehat{A}(c_{n}^{'})$ is injective.

Hence, \begin{align*} \qquad\,
  c_{n}(BA-I) &\leq c_{n}(AC-I) \ \ (\mathrm{by \ the \ previous \ paragraph})\\
         &=c_{n}(CA-I) \ \ (\mathrm{by \ [\cite{Zeng-Zhong-RS-SR}, \ Lemma \ 3.9]}) \\
         &\leq c_{n}(AB-I)  \\
         &\quad\,(\mathrm{by \ the \ previous \ paragraph \ and \ interchanging \ } B \mathrm{ \ and \ } C) \\
         &= c_{n}(BA-I) \ \ (\mathrm{by \ [\cite{Zeng-Zhong-RS-SR}, \ Lemma \ 3.9]})
     \end{align*}
So $c_{n}(AC-I) = c_{n}(BA-I)$ for all $n \in \mathbb{N}$.
\end{proof}

\begin {lemma}${\label{2.7}}$ $k_{n}(AC-I)=k_{n}(BA-I)$ for all $n \in
 \mathbb{N}$. Consequently, $k(AC-I)=k(BA-I)$.

\end{lemma}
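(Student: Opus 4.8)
The plan is to mimic the structure of Lemmas \ref{2.5} and \ref{2.6}, exploiting the fact that each $k_{n}(T)$ admits a description purely in terms of the quotient spaces already appearing in those proofs. Recall from the introduction that
$$k_{n}(T) = \dim (\mathcal {R}(T) + \mathcal {N}(T^{n+1})) / (\mathcal {R}(T) + \mathcal {N}(T^{n})).$$
First I would seek a linear map induced by $A$ between the relevant quotients for $T=BA-I$ and $T=AC-I$. Concretely, since parts (1) and (2) of Lemma \ref{2.3} give $A(\mathcal{R}(BA-I)+\mathcal{N}((BA-I)^{m})) \subseteq \mathcal{R}(AC-I)+\mathcal{N}((AC-I)^{m})$ for every $m$, the operator $A$ descends to a well-defined linear map
$$\widehat{A}(k_{n}) \colon (\mathcal {R}(BA-I) + \mathcal {N}((BA-I)^{n+1})) / (\mathcal {R}(BA-I) + \mathcal {N}((BA-I)^{n})) \longrightarrow (\mathcal {R}(AC-I) + \mathcal {N}((AC-I)^{n+1})) / (\mathcal {R}(AC-I) + \mathcal {N}((AC-I)^{n})).$$

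Next I would show $\widehat{A}(k_{n})$ is injective, which yields the inequality $k_{n}(BA-I) \leq k_{n}(AC-I)$. Injectivity means: if $x \in \mathcal{R}(BA-I)+\mathcal{N}((BA-I)^{n+1})$ and $Ax \in \mathcal{R}(AC-I)+\mathcal{N}((AC-I)^{n})$, then $x \in \mathcal{R}(BA-I)+\mathcal{N}((BA-I)^{n})$. I expect this to follow from the same $BAC$-twisting computation used in the injectivity arguments of Lemmas \ref{2.5} and \ref{2.6}: write $Ax=(AC-I)y+w$ with $w \in \mathcal{N}((AC-I)^{n})$, then run the telescoping identity $x=(BA-I)(\cdots)+BACw$ and apply Lemma \ref{2.3}(3) to place $BACw \in \mathcal{N}((BA-I)^{n})$, while the $(BA-I)(\cdots)$ term lands in $\mathcal{R}(BA-I)$. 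The one point requiring care is that here $x$ is only a representative of a class in the source quotient, so I must check the membership conclusion is independent of the chosen representative — but since I am proving $x$ itself lands in the denominator subspace, this is automatic.

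Finally I would close the loop exactly as in the earlier lemmas: combining the injectivity inequality with the known equality $k_{n}(AC-I)=k_{n}(CA-I)$ from [\cite{Zeng-Zhong-RS-SR}, Lemma 3.12] (the $B=C$ special case), then applying the same injectivity argument with $B$ and $C$ interchanged to get $k_{n}(CA-I) \leq k_{n}(AB-I)$, and using $k_{n}(AB-I)=k_{n}(BA-I)$ again from the $B=C$ result, I obtain the chain $k_{n}(BA-I) \leq k_{n}(AC-I) = k_{n}(CA-I) \leq k_{n}(AB-I) = k_{n}(BA-I)$, forcing equality throughout. Summing over $n$ then gives $k(AC-I)=k(BA-I)$. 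The main obstacle I anticipate is the injectivity verification: unlike the stable nullity and defect cases, the numerator and denominator here differ only in the power of $T$ appearing inside $\mathcal{N}$, so the algebraic identity must be carried out with attention to keeping track of which power of $BA-I$ annihilates $BACw$, and confirming that Lemma \ref{2.3}(3) applies at exactly the exponent $n$ rather than $n+1$.
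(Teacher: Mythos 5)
Your proposal is correct, and it reaches the conclusion by a genuinely parallel but distinct route. The paper works with the \emph{intersection} description $k_{n}(T)=\dim(\mathcal{N}(T)\cap\mathcal{R}(T^{n}))/(\mathcal{N}(T)\cap\mathcal{R}(T^{n+1}))$: its induced map $\widehat{A}(k_{n})$ lives on quotients of $\mathcal{N}(BA-I)\cap\mathcal{R}((BA-I)^{n})$, well-definedness comes from Lemma \ref{2.3}(1)--(2), and injectivity requires a fresh telescoping computation invoking Lemma \ref{2.3}(4) to place $BACAx$ in $\mathcal{R}((BA-I)^{n+1})$. You instead use the dual \emph{sum} description $k_{n}(T)=\dim(\mathcal{R}(T)+\mathcal{N}(T^{n+1}))/(\mathcal{R}(T)+\mathcal{N}(T^{n}))$, also recorded in the introduction, so your quotients are the same spaces already handled in Lemmas \ref{2.4} and \ref{2.6}; the injectivity step then reduces verbatim to the implication ``$Ax\in\mathcal{R}(AC-I)+\mathcal{N}((AC-I)^{n})\Rightarrow x\in\mathcal{R}(BA-I)+\mathcal{N}((BA-I)^{n})$'' proved in the first paragraph of Lemma \ref{2.4} (via Lemma \ref{2.3}(3) with $d=n$), and in fact holds for arbitrary $x\in X$, so your remark about representative-independence is moot in the right way. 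What your route buys is economy: no new algebraic identity is needed and Lemma \ref{2.3}(4) is never used; what the paper's route buys is symmetry with its treatment of $c_{n}'$ (which must use intersections of kernels with ranges). The closing symmetry chain $k_{n}(BA-I)\leq k_{n}(AC-I)=k_{n}(CA-I)\leq k_{n}(AB-I)=k_{n}(BA-I)$ is identical in both arguments; the only blemish is your citation of Lemma 3.12 of the $B=C$ paper where the relevant statement is its Lemma 3.8, which does not affect the logic.
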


\begin{proof} Let $\widehat{A}(k_{n})$ be the linear mapping
induced by $A$ from
$$(\mathcal {N}(BA-I) \cap \mathcal {R}((BA-I)^{n})) / (\mathcal {N}(BA-I) \cap \mathcal{R}((BA-I)^{n+1}))$$
to
$$(\mathcal {N}(AC-I) \cap \mathcal{R}((AC-I)^{n})) / (\mathcal {N}(AC-I) \cap \mathcal{R}((AC-I)^{n+1})).$$
Since
$$A(\mathcal {N}(BA-I) \cap \mathcal{R}((BA-I)^{n+1})) \subseteq \mathcal {N}(AC-I) \cap \mathcal{R}((AC-I)^{n+1})$$
(by Lemma \ref{2.3}(1) and (2)), we thus know that $\widehat{A}(k_{n})$ is well defined.

Next, we show that $\widehat{A}(k_{n})$ is injective. In fact, let $x \in \mathcal {N}(BA-I) \cap \mathcal {R}((BA-I)^{n})$ and
$Ax \in \mathcal {N}(AC-I) \cap \mathcal{R}((AC-I)^{n+1})$. Then by Lemma \ref{2.3}(4), we have $BACAx \in \mathcal{R}((BA-I)^{n+1}).$
Hence, \begin{align*}\qquad\,\qquad\,\qquad\,
  x&= BAx -(BA-I)x \\
   &=BAx-BACAx+BACAx -(BA-I)x \\
   & =BAx-BABAx+BACAx -(BA-I)x \\
   &=BA(I-BA)x+BACAx-(BA-I)x   \\
   &\in \mathcal{R}((BA-I)^{n+1}),
     \end{align*}
thus $x \in \mathcal {N}(BA-I) \cap \mathcal{R}((BA-I)^{n+1}).$
Therefore, $\widehat{A}(c_{n}^{'})$ is injective.

Hence, \begin{align*} \qquad\,
  k_{n}(BA-I) &\leq k_{n}(AC-I) \ \ (\mathrm{by \ the \ previous \ paragraph})\\
         &=k_{n}(CA-I) \ \ (\mathrm{by \ [\cite{Zeng-Zhong-RS-SR}, \ Lemma \ 3.8]}) \\
         &\leq k_{n}(AB-I)  \\
         &\quad\,(\mathrm{by \ the \ previous \ paragraph \ and \ interchanging \ } B \mathrm{ \ and \ } C) \\
         &= k_{n}(BA-I) \ \ (\mathrm{by \ [\cite{Zeng-Zhong-RS-SR}, \ Lemma \ 3.8]})
     \end{align*}
So $k_{n}(AC-I) = k_{n}(BA-I)$ for all $n \in \mathbb{N}$.
\end{proof}

Since the basic components of regularities ${\bf R_{i}}$ ($1\leq i \leq 19$) are all considered in
Lemmas \ref{2.1}, \ref{2.2} and \ref{2.4}--\ref{2.7},
we are now in a position to give the proof of the following main result.

\begin{theorem} ${\label{2.8}}$  $\sigma_{{\bf R_{i}}} (AC) \backslash \{0\} = \sigma_{{\bf
R_{i}}} (BA) \backslash \{0\}$ for $1 \leq i \leq 19$.
\end{theorem}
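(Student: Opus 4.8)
The plan is to reduce the spectral statement, for each nonzero $\lambda$, to the equality of the basic quantities already established in Lemmas \ref{2.1}, \ref{2.2} and \ref{2.4}--\ref{2.7}, after absorbing $\lambda$ into $B$ and $C$. Fix $\lambda \in \mathbb{C} \setminus \{0\}$ and set $\widetilde{B} = \lambda^{-1} B$ and $\widetilde{C} = \lambda^{-1} C$. Since $A \widetilde{B} A = \lambda^{-1} ABA = \lambda^{-1} ACA = A \widetilde{C} A$, the triple $(A, \widetilde{B}, \widetilde{C})$ again satisfies (\ref{eq1.2}). Consequently all of Lemmas \ref{2.1}, \ref{2.2} and \ref{2.4}--\ref{2.7} apply verbatim to this triple, yielding
$$ c_{n}(A\widetilde{C}-I) = c_{n}(\widetilde{B}A-I), \quad c_{n}^{'}(A\widetilde{C}-I) = c_{n}^{'}(\widetilde{B}A-I), \quad k_{n}(A\widetilde{C}-I) = k_{n}(\widetilde{B}A-I) $$
for every $n \in \mathbb{N}$, while $\mathcal{R}((A\widetilde{C}-I)^{n})$ and $\mathcal{R}(A\widetilde{C}-I) + \mathcal{N}((A\widetilde{C}-I)^{d})$ are closed precisely when the corresponding objects for $\widetilde{B}A-I$ are.

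Next I would check that these are exactly the data entering the definitions of ${\bf R_{1}}, \ldots, {\bf R_{19}}$. Each ${\bf R_{i}}$ is defined by imposing vanishing or finiteness of some of $c_{n}$, $c_{n}^{'}$, $k_{n}$ (equivalently of their sums $c$, $c^{'}$, $k$) together with the closedness of a range $\mathcal{R}(T^{m})$ or of a sum $\mathcal{R}(T) + \mathcal{N}(T^{d})$. Since every one of these ingredients coincides for $A\widetilde{C}-I$ and $\widetilde{B}A-I$ by the displayed equalities and the closedness lemmas, it follows that
$$ A\widetilde{C}-I \in {\bf R_{i}} \Longleftrightarrow \widetilde{B}A-I \in {\bf R_{i}}, \qquad 1 \leq i \leq 19. $$
This is the bookkeeping step, and it is where I expect the only genuine work to lie: one must confirm, case by case, that no regularity in the list depends on an invariant not covered by Lemmas \ref{2.1}, \ref{2.2} and \ref{2.4}--\ref{2.7}. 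The paper's remark that ``the basic components of regularities ${\bf R_{i}}$ are all considered'' in those lemmas is precisely the assertion that this check goes through.

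Finally I would transfer back to $AC$ and $BA$ using the scalar homogeneity of the regularities. For any nonzero scalar $\alpha$ and any $T \in \mathcal{B}(X)$ one has $\mathcal{R}((\alpha T)^{n}) = \mathcal{R}(T^{n})$ and $\mathcal{N}((\alpha T)^{n}) = \mathcal{N}(T^{n})$, so $c_{n}$, $c_{n}^{'}$, $k_{n}$ and all the relevant closedness properties are left unchanged; hence $\alpha T \in {\bf R_{i}} \Longleftrightarrow T \in {\bf R_{i}}$. Applying this with $\alpha = -\lambda$ to the identities $\lambda I - AC = -\lambda(A\widetilde{C}-I)$ and $\lambda I - BA = -\lambda(\widetilde{B}A-I)$ gives
$$ \lambda I - AC \in {\bf R_{i}} \Longleftrightarrow A\widetilde{C}-I \in {\bf R_{i}} \Longleftrightarrow \widetilde{B}A-I \in {\bf R_{i}} \Longleftrightarrow \lambda I - BA \in {\bf R_{i}}. $$
By the definition $\sigma_{{\bf R_{i}}}(T) = \{\lambda \in \mathbb{C} : \lambda I - T \notin {\bf R_{i}}\}$, this equivalence, holding for every $\lambda \neq 0$, says exactly that $\lambda \in \sigma_{{\bf R_{i}}}(AC)$ if and only if $\lambda \in \sigma_{{\bf R_{i}}}(BA)$; that is, $\sigma_{{\bf R_{i}}}(AC) \backslash \{0\} = \sigma_{{\bf R_{i}}}(BA) \backslash \{0\}$ for $1 \leq i \leq 19$, as desired.
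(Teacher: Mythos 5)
Your proposal is correct and is essentially the paper's own argument: the authors' proof is the one-line statement that the conclusion ``follows directly'' from Lemmas \ref{2.1}, \ref{2.2} and \ref{2.4}--\ref{2.7}, and what you have written out --- rescaling $B,C$ to $\lambda^{-1}B,\lambda^{-1}C$ (which preserves (\ref{eq1.2})), applying the lemmas to the rescaled triple, and transferring back via the scalar homogeneity of each ${\bf R_{i}}$ --- is exactly the standard reduction they leave implicit. The bookkeeping check that every defining ingredient of ${\bf R_{1}},\ldots,{\bf R_{19}}$ is covered by those lemmas is also correctly carried out.
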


\begin{proof} Applying Lemmas \ref{2.1}, \ref{2.2} and \ref{2.4}--\ref{2.7}, the desired conclusion follows directly.
\end{proof}


\end{document}